\newtheorem{theorem}{Theorem}
\newtheorem{lemma}[theorem]{Lemma}
\newtheorem{remark}[theorem]{Remark}
\newtheorem{proposition}[theorem]{Proposition}
\newtheorem{definition}[theorem]{Definition}
\newcommand{\icomp}{\mathtt{i}}
\newcommand{\rd}{\,\mathrm{d}}
\newcommand{\bsx}{\boldsymbol{x}}
\newcommand{\bsy}{\boldsymbol{y}}
\newcommand{\bsl}{\boldsymbol{l}}
\newcommand{\bsk}{\boldsymbol{k}}
\newcommand{\bsh}{\boldsymbol{h}}
\newcommand{\bsell}{\boldsymbol{l}}
\newcommand{\bst}{\boldsymbol{t}}
\newcommand{\bssigma}{\boldsymbol{\sigma}}
\newcommand{\bszero}{\boldsymbol{0}}
\newcommand{\uu}{\mathfrak{u}}
\newcommand{\FF}{\ZZ}
\newcommand{\NN}{\mathbb{N}}
\newcommand{\Dcal}{\mathcal{D}}
\newcommand{\Ecal}{\mathcal{E}}
\newcommand{\Jcal}{\mathcal{J}}
\newcommand{\ZZ}{\mathbb{Z}}
\newcommand{\Mod}{\operatorname{mod}}
\newcommand{\mypmod}[1]{\,(\Mod\,#1)}
\newcommand{\wal}{{\rm wal}}
\newcommand{\cP}{\mathcal{P}}
\newcommand{\cS}{\mathcal{S}}
\newcommand{\satop}[2]{\stackrel{\scriptstyle{#1}}{\scriptstyle{#2}}}
\begin{document}

\title{Optimal periodic $L_2$-discrepancy and diaphony bounds for higher order digital sequences}

\author{Friedrich Pillichshammer\thanks{ORCID ID 0000-0001-6952-9218. The author is supported by the Austrian Research Foundation (FWF), Project F5509-N26, that is part of the Special Research Program ``Quasi-Monte Carlo Methods: Theory and Applications''.}}

\date{}

\maketitle

\begin{abstract}
We present an explicit construction of infinite sequences of points $(\bsx_0,\bsx_1, \bsx_2, \ldots)$ in the $d$-dimensional unit-cube whose periodic $L_2$-discrepancy satisfies $$L_{2,N}^{{\rm per}}(\{\bsx_0,\bsx_1,\ldots, \bsx_{N-1}\}) \le C_d  N^{-1} (\log N)^{d/2} \quad \mbox{for all } N \ge 2,$$ where the factor $C_d > 0$ depends only on the dimension $d$. The construction is based on higher order digital sequences as introduced by J. Dick in the year 2008. The result is best possible in the order of magnitude in $N$ according to a Roth-type lower bound shown first by P.D. Proinov. Since the periodic $L_2$-discrepancy is equivalent to P. Zinterhof's diaphony the result also applies to this alternative quantitative measure for the irregularity of distribution. 
\end{abstract}

{\bf Keywords}: Periodic $L_2$-discrepancy, diaphony, explicit constructions, digital sequence, higher order sequence

{\bf AMS Subject Classification}: 11K38, 11K06, 11K45

\section{Introduction and Statement of the Main Result}

We study distribution properties of point sequences in the $d$-dimensional unit-cube $[0,1)^d$ by means of periodic $L_2$-discrepancy and of diaphony.

\paragraph{Periodic $L_2$-discrepancy.} The periodic $L_2$-discrepancy is the $L_2$ mean of the local discrepancy function with respect to test sets from the class of ``periodic intervals''. This is in contrast to the usual $L_2$-discrepancy which uses as test sets exclusively subintervals of the unit-cube that are anchored in the origin, i.e., intervals of the form $[\bszero,\bst):=[0,t_1) \times \ldots \times [0,t_d)$, where $\bst=(t_1,\ldots,t_d)$ in $[0,1]^d$. See \cite{BC, DP10, DT97,kuinie,mat} for general information about (standard) $L_2$-discrepancy. 

For $x,y\in [0,1]$ set 
$$ I(x,y)=\begin{cases}
           [x,y) & \text{if $x\leq y$}, \\
           [0,y)\cup [x,1)& \text{if $x>y$,}
          \end{cases}$$
and for $\bsx,\bsy \in [0,1]^d$ with $\bsx=(x_1,\ldots,x_d)$ and $\bsy=(y_1,\ldots,y_d)$ we set $B(\bsx,\bsy)=I(x_1,y_1)\times \dots \times I(x_d,y_d)$. We call $B(\bsx,\bsy)$ a ``periodic interval''. 

For a finite set $\cP_{N,d} =\{\bsx_0,\bsx_1,\ldots ,\bsx_{N-1}\}$ of points in the $d$-dimensional unit-cube $[0,1)^d$ the periodic $L_2$-discrepancy is defined as
$$ L_{2,N}^{\mathrm{per}}(\cP):=\left(\int_{[0,1]^d}\int_{[0,1]^d}\left|\frac{A(B(\bsx,\bsy),\cP_{N,d})}{N}-\lambda_d(B(\bsx,\bsy))\right|^2\rd \bsx\rd\bsy\right)^{1/2},$$ where the counting function $A(B(\bsx,\bsy),\cP_{N,d})$  denotes the number of indices $n$ with $\bsx_n \in B(\bsx,\bsy)$ and where $\lambda_d(B(\bsx,\bsy))$ denotes the volume of $B(\bsx,\bsy)$. The term $A(B(\bsx,\bsy),\cP_{N,d})/N-\lambda_d(B(\bsx,\bsy))$ is sometimes referred to as the ``local discrepancy'' of $\cP_{N,d}$ for the test set $B(\bsx,\bsy)$. The local discrepancy measures the difference of the portion of points in a periodic interval and the volume of this periodic interval. Hence it is a measure for the irregularity of distribution of a point set in $[0,1)^d$.

For an infinite sequence $\cS_d=(\bsx_0,\bsx_1,\bsx_2,\ldots)$ in $[0,1)^d$ the periodic $L_2$-discrepancy $L_{2,N}^{{\rm per}}(\cS_d)$ is the periodic $L_2$ discrepancy of the point set consisting of the first $N$ elements of $\cS_d$, i.e., $$L_{2,N}^{{\rm per}}(\cS_d):=L_{2,N}^{{\rm per}}(\{\bsx_0,\bsx_1,\ldots,\bsx_{N-1}\}) \quad \mbox{for $N \in \NN$.}$$

The periodic $L_2$-discrepancy has been studied recently in several papers. See, for example, \cite{DHP20,HKP20,HOe,HiWe,KP22a,KP22b,Lev}.

\paragraph{Diaphony.}  The diaphony, as introduced by Zinterhof~\cite{zint} in the year 1976 (see also \cite{DT97}), is just another quantitative measure for the irregularity of distribution of point sets and sequences in the unit-cube $[0,1)^d$. For a finite set $\cP_{N,d} =\{\bsx_0,\bsx_1,\ldots ,\bsx_{N-1}\}$ like above the diaphony is defined as $$F_N(\cP_{N,d}):=\left(\sum_{\bsh \in \ZZ^d\setminus\{\bszero\}} \frac{1}{\rho(\bsh)^2} \left|\frac{1}{N} \sum_{n=0}^{N-1} \exp(2 \pi \icomp \bsh \cdot \bsx_n)\right|^2\right)^{1/2},$$ where $\icomp=\sqrt{-1}$, where $\rho(\bsh):=\prod_{j=1}^d \max(1,|h_j|)$ for $\bsh=(h_1,\ldots,h_d) \in \ZZ^d$ and where ``$\cdot$'' denotes the usual Euclidean inner product. For an infinite sequence $\cS_d$ the diaphony $F_N(\cS_d)$ is the diaphony of the initial $N$ elements of $\cS_d$, i.e., $$F_N(\cS_d):=F_N(\{\bsx_0,\bsx_1,\ldots,\bsx_{N-1}\})\quad \mbox{for $N \in \NN$.}$$ 

The diaphony is a popular quantitative measure for the irregularity of distribution in the sense that a sequence $\cS_d$ is uniformly distributed modulo one (in the sense of Weyl~\cite{weyl}), if and only if $\lim_{N \rightarrow \infty} F_N(\cS_d)=0$; see, for example, \cite[Theorem~1.33]{DT97}. Diaphony of point sets and sequences is studied in a multitude of papers, see, for example,  \cite{chafa,F2005,g96,Lev0,pag,PS,pro1988a,pg} and the references therein. Relations between the number-theoretic concept of diaphony and crystallographic concepts are discussed in \cite{HK}.

\paragraph{Relations and known results.} It is a known fact that the periodic $L_2$-discrepancy can be expressed in terms of exponential sums. For $\cP_{N,d}=\{\bsx_0,\bsx_1,\ldots,\bsx_{N-1}\}$ in $[0,1)^d$ we have 
\begin{equation}\label{pr_dia}
L_{2,N}^{{\rm per}}(\cP_{N,d})=\left(\frac{1}{3^d} \sum_{\bsh \in \ZZ^d\setminus\{\bszero\}} \frac{1}{r(\bsh)^2} \left|\frac{1}{N} \sum_{n=0}^{N-1} \exp(2 \pi \icomp \bsh \cdot \bsx_n)\right|^2\right)^{1/2},
\end{equation} 
where for $\bsh=(h_1,\ldots,h_d)\in \ZZ^d$ we set 
\begin{equation*}
r(\bsh)=\prod_{j=1}^d r(h_j) \ \ \ \mbox{ and } \ \ r(h_j)=\left\{ 
\begin{array}{ll}
1 & \mbox{ if $h_j=0$},\\
\frac{2 \pi |h_j|}{\sqrt{6}} & \mbox{ if $h_j\not=0$.} 
\end{array}\right.
\end{equation*}
For a proof of this relation see \cite[Theorem~1]{Lev} or \cite[p.~390]{HOe}. Formula \eqref{pr_dia} shows that the periodic $L_2$-discrepancy is -- up to a multiplicative factor depending on $d$ -- exactly the diaphony. 

We briefly explain some notation: For quantities $A(N,d)$ and $B(N,d)$ depending on $N$ and $d$ we write $A(N,d) \lesssim_d B(N,d)$ if there is a factor $c_d > 0$ which depends only on $d$ (and not on $N$) such that $A(N,d) \le c_d B(N,d)$. If we have $A(N,d) \lesssim_d B(N,d)$ and $B(N,d) \lesssim_d A(N,d)$ simultaneously, then we write $A(N,d) \asymp_d B(N,d)$. 

Using this notation we can express the fact that periodic $L_2$-discrepancy and diaphony are the same up to a factor depending only on $d$ in the form $$L_{2,N}^{{\rm per}}(\cP_{N,d})  \asymp_d F_N(\cP_{N,d}).$$ So the periodic $L_2$-discrepancy can be understood as a geometrical interpretation of the diaphony, which is of analytic nature. From this point of view, the results about periodic $L_2$-discrepancy presented in this paper apply directly also to the diaphony. Both measures can be also interpreted as worst-case errors of quasi-Monte Carlo integration rules in suitable function classes. For discussions in this direction we refer to \cite{HKP20,HOe,pag}.

In classical discrepancy theory (see \cite{BC,DT97,kuinie,mat}) one is interested in how small the discrepancy can be in the best case and in constructions of point sets and sequences whose discrepancy is of the optimal order of magnitude. For the periodic $L_2$-discrepancy and diaphony, respectively, the following is known:

\paragraph{Finite point sets.} For every dimension $d \in \NN$ there exists a quantity $c_d>0$ which only depends on $d$, such that for every $N \in \NN$ and every $N$-element point set $\cP_{N,d}$  in $[0,1)^d$ we have 
\begin{equation}\label{LB:pts}
L_{2,N}^{{\rm per}}(\cP_{N,d}) \ge c_d \frac{(\log N)^{(d-1)/2}}{N}.
\end{equation} 
See, for example, \cite[Corollary~2]{HKP20}. This is the ``periodic'' counterpart of Roth's famous lower bound for the usual $L_2$-discrepancy from \cite{Roth}. The lower bound \eqref{LB:pts} is known to be best possible in dimension $d=2$, see, for example, \cite{g99,HKP20}. For arbitrary dimension $d$, it follows from \cite[Theorem~5.3]{HMOU} (see the discussion in \cite[Remark~18]{KP22b}) that the periodic $L_2$-discrepancy of order 2 digital $(t,m,d)$-nets over $\ZZ_2$ is of order of magnitude $(\log N)^{(d-1)/2}/N$, where $N=2^m$, which matches the lower bound \eqref{LB:pts}. Another ``semi'' construction, based on digitally shifted digital nets in prime base $b$ is presented in \cite[Section~4]{KP22b} (but unfortunately here the digital shifts leading to the optimal order of magnitude are unknown, which is indicated by the term ``semi construction''). 

\begin{remark}\label{re:lev}\rm 
Just to avoid misunderstandings we mention that Lev~\cite{Lev0} studied a slightly more general notion of diaphony which involves certain weights and which he called generalized diaphony. For this kind of generalized diaphony he showed the existence of certain nets (where Lev means point sets together with suitable positive weights for every point) yielding optimal order of magnitude (see \cite[Main Theorem]{Lev0}). However, these upper bounds are only achieved for certain weights which do not comprise the classical setting considered in the present paper. 
\end{remark}

\paragraph{Infinite sequences.} For every dimension $d \in \NN$ there exists a quantity $c'_d>0$ which only depends on $d$, such that for every infinite sequence $\cS_d$ of points in $[0,1)^d$ we have 
\begin{equation}\label{LB:seq}
L_{2,N}^{{\rm per}}(\cS_d) \ge c'_d \frac{(\log N)^{d/2}}{N}\quad \mbox{for infinitely many $N \in \NN$.}
\end{equation}
This has been shown first in the context of diaphony by Pro{\u\i}nov~\cite{pro2000} (this work is only available in Bulgarian language). A discussion of Pro{\u\i}nov's work can be found in \cite{kirk}. Later, a direct proof in terms of periodic $L_2$-discrepancy has been provided in \cite{KP22a}. The lower bound \eqref{LB:seq} is known to be best possible in dimension $d=1$. One-dimensional infinite sequences whose periodic $L_2$-discrepancy satisfies a bound of order of magnitude $\sqrt{\log N}/N$ for every $N \ge 2$ were given in, e.g., \cite{chafa,g96,pag,pro1988a,pg}. These constructions comprise the fundamental van der Corput sequence as shown by Pro{\u\i}nov and Grozdanov~\cite{pg} (for information about the van der Corput sequence see \cite{FKP}). On the other hand, although it suggests itself to believe that the lower bound \eqref{LB:seq} is also best possible for arbitrary dimensions $d \ge 2$, so far there was no proof for this assertion.  

\paragraph{The main result.} In this paper we give for the first time an explicit construction of {\it infinite} sequences in $[0,1)^d$ for arbitrary dimension $d$, whose periodic $L_2$-discrepancy is of order of magnitude $(\log N)^{d/2}/N$ in $N$ which is best possible according to \eqref{LB:seq}. In fact, our result even gives a slightly more refined description of the whole situation.  Our main result is the following:

\begin{theorem}\label{thm1}
For every $d \in \NN$ one can explicitly construct an infinite sequence $\cS_d$ of points in $[0,1)^d$ such that for all $N \ge 2$ we have
\begin{equation*}
L_{2,N}^{{\rm per}}(\cS_d) \lesssim_d  \frac{(\log N)^{(d-1)/2}}{N} \sqrt{S(N)} \lesssim_d \frac{(\log N)^{d/2}}{N},
\end{equation*}
where $S(N)$ is the sum-of-digits function of $N$ in base 2 representation, i.e., if $N = 2^{n_1} + 2^{n_2} + \cdots + 2^{n_r}$ with $n_1 > n_2 > \cdots > n_r \ge 0$, then $S(N)=r$. 
\end{theorem}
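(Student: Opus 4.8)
The plan is to take $\cS_d$ to be a digital $(t,d)$-sequence over $\ZZ_2$ of order $\alpha$ in the sense of Dick, for a fixed $\alpha\ge 2$ (with $t$ depending on $d,\alpha$; $\alpha=2$ should suffice, larger $\alpha$ being harmless), and to estimate its diaphony directly from \eqref{pr_dia}: since $L_{2,N}^{\mathrm{per}}(\cP_{N,d})\asymp_d F_N(\cP_{N,d})$, it is enough to bound $\sum_{\bsh\in\ZZ^d\setminus\{\bszero\}}r(\bsh)^{-2}|\widehat\mu_N(\bsh)|^2$, where $\widehat\mu_N(\bsh):=\frac1N\sum_{n=0}^{N-1}e^{2\pi\icomp\bsh\cdot\bsx_n}$. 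The obstruction to using the digital structure is that $e^{2\pi\icomp hx}$ is not a Walsh function, so the first step is to expand each factor $e^{2\pi\icomp h_jx_j}=\sum_{k_j\ge0}\widehat{e_{h_j}}(k_j)\,\wal_{k_j}(x_j)$ in the base-$2$ Walsh system and tensorise, giving $\widehat\mu_N(\bsh)=\sum_{\bsk}\widehat{e_{\bsh}}(\bsk)\,\widehat\nu_N(\bsk)$ with $\widehat{e_{\bsh}}(\bsk)=\prod_{j=1}^d\widehat{e_{h_j}}(k_j)$ and $\widehat\nu_N(\bsk):=\frac1N\sum_{n=0}^{N-1}\wal_{\bsk}(\bsx_n)$. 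Here one uses the known explicit form and decay estimates for the Walsh coefficients of trigonometric (more generally, smooth) functions from Dick's theory of Walsh spaces containing smooth functions; these are strong enough that, once combined with the higher-order digit weight $\mu_\alpha$ of the dual net below, every series that arises converges with the right order in $N$ and $d$. (An alternative would be to dominate the diaphony by a Walsh-type ``dyadic'' discrepancy functional and cite known digital-sequence bounds; the route below is more self-contained.)

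The second ingredient is the finite-net estimate. For $N=2^m$ one has $\widehat\nu_{2^m}(\bsk)\in\{0,1\}$, equal to $1$ precisely on the dual net $\Dcal_m$ of the first $2^m$ points; since $\cS_d$ has order $\alpha$, every nonzero $\bsk\in\Dcal_m$ satisfies $\mu_\alpha(\bsk)\ge\alpha m-c_{\alpha,d,t}$. Substituting the Walsh-coefficient bounds for $\widehat{e_{\bsh}}(\bsk)$, summing the resulting geometric-type series in $\bsk$, and then summing in $\bsh$ against $r(\bsh)^{-2}$, yields $L_{2,2^m}^{\mathrm{per}}(\cS_d)\lesssim_d m^{(d-1)/2}\,2^{-m}$, the optimal finite-point-set order matching \eqref{LB:pts} (this is the sequence counterpart of the order-$2$ net bound recorded in the introduction). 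A point worth isolating: this estimate is insensitive to digital shifts, because a digital shift by $\bssigma$ replaces $\widehat\nu_N(\bsk)$ by $\wal_{\bsk}(\bssigma)\,\widehat\nu_N(\bsk)$ with $|\wal_{\bsk}(\bssigma)|=1$, so every bound on the moduli $|\widehat\nu_N(\bsk)|$, hence on the diaphony, transfers verbatim.

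For general $N$, write $N=2^{n_1}+\cdots+2^{n_r}$ with $n_1>\cdots>n_r\ge 0$ and $r=S(N)$, set $N_{i-1}:=2^{n_1}+\cdots+2^{n_{i-1}}$, and split $\{0,1,\dots,N-1\}=\bigcup_{i=1}^r[N_{i-1},N_i)$ into the $r$ consecutive dyadic blocks. On the $i$-th block the sequence runs through a digital shift — by the digit vector of $N_{i-1}$ — of the first $2^{n_i}$ points of $\cS_d$, i.e.\ of a digital net of order $\alpha$, and a direct digit computation gives $\widehat\nu_N(\bsk)=\frac1N\sum_{i=1}^r 2^{n_i}(-1)^{\langle \bsl(\bsk),N_{i-1}\rangle}\,[\bsk\in\Dcal_{n_i}]$, where $\bsl(\bsk)$ is the image of $\bsk$ under the transposed generating matrices. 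Writing $W_i(\bsh):=\sum_{\bsk\in\Dcal_{n_i}}\widehat{e_{\bsh}}(\bsk)(-1)^{\langle \bsl(\bsk),N_{i-1}\rangle}$ we get $\widehat\mu_N(\bsh)=\frac1N\sum_{i=1}^r 2^{n_i}W_i(\bsh)$, hence $N^2\big(L_{2,N}^{\mathrm{per}}(\cS_d)\big)^2\asymp_d\big\|\sum_{i=1}^r 2^{n_i}w^{(i)}\big\|_2^2$ with $w^{(i)}:=(r(\bsh)^{-1}W_i(\bsh))_{\bsh\ne\bszero}$; and by the previous paragraph, applied with the digital shift encoded by $N_{i-1}$, one has $\|w^{(i)}\|_2^2\lesssim_d n_i^{d-1}\,2^{-2n_i}$.

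The crux — and the step I expect to be the main obstacle — is to sum over the blocks without losing a power of $\log N$. The triangle inequality only gives $\|\sum_i 2^{n_i}w^{(i)}\|_2\lesssim_d\sum_i n_i^{(d-1)/2}\le r\,n_1^{(d-1)/2}$, producing the inferior factor $S(N)$ in place of $\sqrt{S(N)}$. To reach the sharp bound one must show the block contributions $w^{(i)}$ are \emph{almost orthogonal} in the weighted $\ell_2$-space, so that $\big\|\sum_i 2^{n_i}w^{(i)}\big\|_2^2\lesssim_d\sum_i 2^{2n_i}\|w^{(i)}\|_2^2\lesssim_d\sum_i n_i^{d-1}\le r\,n_1^{d-1}\le S(N)\,(1+\log_2 N)^{d-1}$; taking square roots gives $L_{2,N}^{\mathrm{per}}(\cS_d)\lesssim_d N^{-1}(\log N)^{(d-1)/2}\sqrt{S(N)}$, and the second inequality of the theorem then follows from $S(N)\le 1+\log_2 N$. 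Almost orthogonality means bounding the cross terms $\langle w^{(i)},w^{(i')}\rangle$ for $i\ne i'$; for $i<i'$ (so that $\Dcal_{n_i}\subsetneq\Dcal_{n_{i'}}$ and the two blocks have very different lengths) the required cancellation has to be extracted from the characters $\bsk\mapsto(-1)^{\langle \bsl(\bsk),N_{i-1}\rangle}$ and $\bsk\mapsto(-1)^{\langle \bsl(\bsk),N_{i'-1}\rangle}$ acting together with the Walsh-coefficient decay of $e_{\bsh}$ — a bare absolute-value estimate is precisely what is too lossy here. In dimension one, and for the van der Corput sequence, this cancellation is encoded in the exact discrepancy formulas of Pro{\u\i}nov and Grozdanov; for the general case I would model the argument on the corresponding analysis of the ordinary $L_2$-discrepancy of higher order digital sequences (Dick--Pillichshammer), where the interplay between the dyadic block structure and the higher-order digit weights $\mu_\alpha$ supplies exactly the needed orthogonality.
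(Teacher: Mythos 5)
Your overall architecture (dyadic block decomposition of the first $N$ points, each block a digitally shifted higher order digital net, reduction of the diaphony/periodic $L_2$-discrepancy to Walsh coefficients against the dual nets $\Dcal_{n_i}$) matches the paper's strategy, and you correctly isolate the crux: summing the $r=S(N)$ blocks must yield $S(N)$ for the \emph{squared} discrepancy, not $S(N)^2$. But precisely at that point your proposal has a genuine gap, and the mechanism you sketch for closing it is not the one that works. You propose to extract cancellation from the shift characters $\bsk\mapsto\wal_{\bsk}(\bssigma_i)$ (your $(-1)^{\langle \bsl(\bsk),N_{i-1}\rangle}$) to make the block vectors $w^{(i)}$ almost orthogonal; the paper's proof does the opposite: it bounds $|\wal_{\bsk}(\bssigma_i)\wal_{\bsell}(\bssigma_{i'})|\le 1$ and gains nothing from these phases. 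The decay across blocks comes instead from two structural facts you do not supply: (i) the explicit near-diagonality of the kernel's Walsh coefficients, $\rho(\bsk,\bsell)=\prod_j\rho(k_j,l_j)$ with $\rho(k,l)\neq 0$ only when $k$ and $l$ agree except possibly in their top one or two binary digits and then $|\rho(\bsk,\bsell)|\le 2^{-\mu(\bsk)-\mu(\bsell)}$ (Lemma~\ref{le:rho}, via the Walsh coefficients of the periodized Bernoulli polynomial $B_2$); and (ii) the order $\alpha\ge 5$ net property, which for pairs $(\bsk,\bsell)\in\Dcal^\ast_i\times\Dcal^\ast_{i'}$ with $\rho(\bsk,\bsell)\neq 0$ forces $\mu(\bsk)+\mu(\bsell)\gtrsim \tfrac14\max\{5m_i+3m_{i'},\,3m_i+5m_{i'}\}-t$, so that after weighting by $2^{-\mu(\bsk)-\mu(\bsell)}$ the cross term $(i,i')$ decays geometrically in $|m_i-m_{i'}|$ and the double sum over blocks is $O(r)$. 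Without an argument of this type (or a genuine proof of your asserted almost-orthogonality), your triangle-inequality fallback only gives the factor $S(N)$ instead of $\sqrt{S(N)}$, i.e.\ the bound $(\log N)^{(d-1)/2}S(N)/N$, which does not prove the theorem for general $N$ (it still gives $(\log N)^{(d+1)/2}/N$ in the worst case). Deferring to ``model the argument on Dick--Pillichshammer'' is exactly where the substance of the proof lies, so as written the decisive step is missing.

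Two further points. First, your claim that $\alpha=2$ ``should suffice'' is not established: the paper's proof needs $\alpha\ge 5$ (the order $5$ net property is what rules out small values of $\mu(\bsk)+\mu(\bsell)$ for cross-block pairs), and the sufficiency of $\alpha=2$ for the periodic $L_2$-discrepancy is stated only as a conjecture. Second, your single-block estimate $\|w^{(i)}\|_2^2\lesssim_d n_i^{d-1}2^{-2n_i}$ for shifted order-$\alpha$ nets is plausible (and consistent with the known order-$2$ net results cited in the introduction), but in your scheme it too rests on the unproved Walsh-coefficient bookkeeping; in the paper it is subsumed in the same counting of $\Ecal_{i,i'}(z)$ rather than proved separately. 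A cleaner route, and the one the paper takes, is to expand the kernel of \eqref{pr_dia} once and for all into the double Walsh series with coefficients $\rho(\bsk,\bsell)$, so that the entire proof reduces to counting dual-net pairs weighted by $2^{-\mu(\bsk)-\mu(\bsell)}$, exactly as in the $L_2$-discrepancy analysis of \cite{DP14}.
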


\begin{remark}\rm
Obviously, we have $S(N) \lesssim \log N$ for all $N \in \NN$ and $S(N)=(\log(N+1))/\log 2$ if $N$ is of the form $N=2^m-1$ with $m \in \NN$. However, for certain $N$ the sum-of-digits function can be much smaller. For example, if $N$ is a power of 2, then $S(N)=1$ only. Hence the main theorem describes an infinite class of integers $N \in \NN$, for which the periodic $L_2$-discrepancy of $\cS_d$ is much smaller than the worse lower bound from \eqref{LB:seq} which holds for infinitely many $N \in \NN$. 
\end{remark}

The explicit construction of infinite sequences from Theorem~\ref{thm1} will be presented in Section~\ref{sec_construction}. Theorem~\ref{thm1} will then be a consequence of Theorem~\ref{thm2} that will be stated at the end of Section~\ref{sec_construction}.

\section{Explicit Constructions of Sequences}\label{sec_construction}

We now present explicit constructions of sequences whose periodic $L_2$-discrepancy satisfies the bound from Theorem~\ref{thm1}. We use the same construction like in \cite{DP14} for sequences with optimal order of the standard $L_2$-discrepancy. The general construction principle was introduced by Dick in \cite{D07,D08} and is based on linear algebra over the finite field $\ZZ_2$ of order $2$ (we identify $\ZZ_2$ with the set $\{0,1\}$ equipped with the arithmetic operations modulo 2). Dick's construction in turn is based on digital sequences according to Niederreiter \cite{nie87,nie92}. 

\paragraph{Digital sequences.}  Basic module for the construction of a digital sequence in $[0,1)^d$ are $d$ (one for each coordinate direction) $\NN \times \NN$ matrices $C_1,\ldots, C_d$ with entries from $\ZZ_2$. In order that the points of a digital sequence belong to $[0,1)^d$ it is commonly assumed that for $C_j=(c_{j,k,l})_{k,l \in \NN}$ for each $l \in \NN$ the elements of column $l$ become zero eventually, i.e., there exists a number $K(l) \in \NN$, such that $c_{j,k,l}=0$ for all $k> K(l)$.

For $n \in \NN_0$, where $\NN_0:=\NN \cup \{0\}$, the $n$-th point of the digital sequence is constructed in the following way. Consider the binary digit expansion of $n$ which is of the form $n = n_0 + n_1 2 + \cdots + n_{m-1} 2^{m-1}$ with $m \in \NN$ and $n_i \in \{0,1\}$ for every $i \in \{0,1,\ldots,m-1\}$,  and define the binary digit vector of $n$ as $\vec{n} := (n_0, n_1, \ldots, n_{m-1}, 0, 0, \ldots )^\top \in \ZZ_2^{\mathbb{N}}$. Then, for $j \in \{1,\ldots, d\}$, define
\begin{equation*}
\vec{x}_{j,n}= (x_{j,n,1}, x_{j,n,2},\ldots)^\top := C_j \vec{n},
\end{equation*}
where the matrix vector product is evaluated over $\ZZ_2$, and 
\begin{equation*}
x_{j,n} := \frac{x_{j,n,1}}{2} + \frac{x_{j,n,2}}{2^2} +  \frac{x_{j,n,3}}{2^3} + \cdots.
\end{equation*}
Then the $n$-th point $\bsx_n$ of the sequence $\cS_d$ is given by $\bsx_n = (x_{1,n}, \ldots, x_{d,n})$. A sequence $\cS_d$ constructed this way is called a {\it digital sequence} (over $\ZZ_2$) with {\it generating matrices} $C_1,\ldots,C_d$.

\begin{remark}\label{re:dnet}\rm
A finite version of digital sequences are so-called {\it digital nets}, which consist of $2^m$ elements in $[0,1)^d$ and which are constructed in exactly the same way like digital sequence with the restriction that $C_1,\ldots,C_d$ are (finite) $p \times m$ matrices over $\ZZ_2$ with fixed $m, p \in \NN$, $p \ge m$, and $n \in \{0,1,\ldots,2^m-1\}$. For more information see \cite{DP10,nie87,nie92}.
\end{remark}

Explicit constructions of suitable generating matrices $C_1,\ldots, C_d$ for digital sequences were obtained by Sobol'~\cite{sob67}, Niederreiter~\cite{nie87,nie92}, Niederreiter-Xing~\cite{NX96} and others (see also \cite[Chapter~8]{DP10}). 

\paragraph{A concrete construction.} We briefly describe a special case of Sobol's and Nieder\-reiter's construction of the generating matrices. Let $C_j = (c_{j,k,l})_{k,l \ge 1}$ with $c_{j,k,l} \in \ZZ_2$. Let $p_1=x$ and $p_j \in \ZZ_2[x]$ for $j \in \{2,\ldots,d-1\}$ be the $(j-1)$-st primitive polynomial in a list of primitive polynomials over $\ZZ_2$ that is sorted in increasing order according to their degree $e_j = \deg(p_j)$, that is, $e_2 \le e_3 \le \cdots \le e_{d-1}$ (the ordering of polynomials with the same degree is irrelevant). We also put $e_1=\deg(x)=1$.

Let $j \in \{1,\ldots,d\}$ and $k \in \NN$. Take $i-1$ and $z$ to be respectively the main term and remainder when we divide $k-1$ by $e_j$, so that   $k-1  = (i-1) e_j + z$, with $z \in \{0,1,\ldots,e_j-1\}$. Now consider the Laurent series expansion
\begin{equation*}
\frac{x^{e_j-z-1}}{p_j(x)^i} = \sum_{l =1}^\infty \frac{a_l(i,j,z)}{x^{l}} \in \ZZ_2((x^{-1}))
\end{equation*}
and  write, for $l \in \NN$,
\begin{equation}\label{def:gNs}
c_{j,k,l} = a_l(i,j,z).
\end{equation}

Now a digital sequence with generating matrices $C_j=(c_{j,k,l})_{k,l \ge 1}$ constructed this way is a special instance of a Sobol' sequence (which itself is a special instance of a generalized Niederreiter sequence). For more information on these constructions we refer to \cite[Chapter~8]{DP10}. Observe that for the presented construction of generating matrices we have $c_{j,k,l}=0$ for all $k>l$.

\paragraph{The general construction principle.} The present construction of sequences with the optimal order of periodic $L_2$-discrepancy is based on higher order digital sequences as introduced by Dick in \cite{D07,D08}. We state here simplified versions of their definitions that are sufficient for the present purpose. 

The distribution quality of digital sequences depends on the choice of the respective generating matrices. In the following definitions we put some restrictions on $C_1,\ldots ,C_d$ with the aim to quantify the quality of equidistribution of the digital sequence. 

First we give the definition for the finite case as introduced in Remark~\ref{re:dnet}.

\begin{definition}\label{def_net}\rm
Let $m, p, \alpha \in \NN$ with $p \ge \alpha m$ and let $t$ be an integer such that $0 \le t \le \alpha m$. Let $C_1,\ldots, C_d$ be $p \times m$ matrices with entries from $\ZZ_2$. For $j \in \{1,\ldots,d\}$ and $i \in \{1,\ldots,p\}$ let $\vec{c}_{j,i} \in \ZZ_2^m$ be the $i$-th row vector of the matrix $C_j$. If for all $1 \le i_{j,\nu_j} < \cdots <
i_{j,1} \le p$ with $$\sum_{j = 1}^d \sum_{l=1}^{\min(\nu_j,\alpha)} i_{j,l}  \le
\alpha m - t$$ the vectors
$$\vec{c}_{1,i_{1,\nu_1}}, \ldots, \vec{c}_{1,i_{1,1}}, \ldots,
\vec{c}_{d,i_{d,\nu_d}}, \ldots, \vec{c}_{d,i_{d,1}}$$ are linearly independent
over $\ZZ_2$, then the digital net constructed from the generating matrices $C_1,\ldots,C_d$ is called an {\it order $\alpha$ digital $(t,m,d)$-net over $\ZZ_2$}. We then say that the matrices $C_1,\ldots,C_d$ satisfy the {\it order $\alpha$ digital $(t,m,d)$-net property}.
\end{definition}

\begin{definition}\rm\label{def_seq}
Let $\alpha \in \NN$ and let $t \ge 0$ be an integer. Let $C_1,\ldots, C_d \in \ZZ_2^{\mathbb{N} \times \mathbb{N}}$ and let $C_{j, \alpha m \times m}$ denote the left upper $\alpha m \times m$ submatrix of  $C_j$ for $j \in \{1,\ldots,d\}$. If for all $m > t/\alpha$ the matrices $C_{1, \alpha m \times m},\ldots, C_{d, \alpha m \times m}$ satisfy the order $\alpha$ digital $(t, m,d)$-net property, then the digital sequence with generating matrices $C_1,\ldots, C_d$ is called an {\it order $\alpha$ digital $(t,d)$-sequence over $\FF_2$}.
\end{definition}

From Definition~\ref{def_seq} it is clear that if $\cS_d$ is an order $\alpha$ digital $(t,d)$-sequence, then for any $t \le t'$, it is also an order $\alpha$ digital $(t',d)$-sequence. If $\alpha =1$, then the concept of order 1 digital $(t,d)$-sequences is exactly the same like digital $(t,d)$-sequences as introduced by Niederreiter in~\cite{nie87,nie92}. For a geometrical interpretation of the order $\alpha$ digital net property we refer to \cite[Chapter~15]{DP10} or to \cite[Section~1.4]{DP14}.

Note that a digital sequence can be an order $\alpha$ digital $(t,d)$-sequence over $\ZZ_2$ and at the same time an order $\alpha'$ digital $(t',d)$-net over $\ZZ_2$ for $\alpha'\not = \alpha$. This means that the quality parameter $t$ may depend on $\alpha$. If necessary we write $t(\alpha)$ instead of $t$ for the quality parameter of an order $\alpha$ digital $(t(\alpha),d)$-sequence. In particular \cite[Theorem~4.10]{D08} implies that an order $\alpha$ digital $(t,d)$-sequence is an order $\alpha'$ digital $(t',d)$-sequence for all $1 \le \alpha' \le \alpha$ with
\begin{equation}\label{eq_t_tprime}
t' = \lceil t \alpha'/\alpha \rceil \le t.
\end{equation}
In other words, $t(\alpha') = \lceil t(\alpha) \alpha'/\alpha \rceil$ for all $1 \le \alpha' \le \alpha$. More information can be found in \cite[Chapter~15]{DP10}. 

Next we present a concrete construction of an order $\alpha$ digital $(t,d)$-sequence.

\paragraph{A concrete construction of order $\alpha$ digital sequences.} In order to construct order $\alpha$ digital sequences we need the following composition principle.

\begin{definition}\rm
For $\alpha \in \NN$ the {\it digit interlacing composition} (with interlacing factor $\alpha$) is defined by
\begin{eqnarray*}
\mathscr{D}_\alpha: [0,1)^{\alpha} & \to & [0,1) \\
(x_1,\ldots, x_{\alpha}) &\mapsto & \sum_{a=1}^\infty \sum_{r=1}^\alpha
\xi_{r,a} 2^{-r - (a-1) \alpha},
\end{eqnarray*}
where $x_r \in [0,1)$ has binary digit expansion of the form $x_r = \xi_{r,1} 2^{-1} + \xi_{r,2} 2^{-2} + \cdots$ with digits $\xi_{r,j} \in \{0,1\}$ for $j \in \NN$ and $r \in \{1,\ldots,\alpha\}$. We also define this function for vectors by setting
\begin{eqnarray*}
\mathscr{D}_\alpha^d : [0,1)^{\alpha d} & \to & [0,1)^d \\
(x_1,\ldots, x_{\alpha d}) &\mapsto & (\mathscr{D}_\alpha(x_1,\ldots, x_\alpha),  \ldots, \mathscr{D}_\alpha(x_{(d-1) \alpha +1},\ldots, x_{\alpha d}))
\end{eqnarray*}
and for sequences $\cS_{\alpha d} = (\bsx_0, \bsx_1, \ldots)$ with $\bsx_n \in [0,1)^{\alpha d}$ by setting
\begin{equation*}
\mathscr{D}_\alpha^d(\cS_{\alpha d}) = (\mathscr{D}_\alpha^d(\bsx_0), \mathscr{D}_\alpha^d(\bsx_1), \ldots).
\end{equation*}
\end{definition}

Likewise, the interlacing can also be applied to the generating matrices $C_1,\ldots, C_{\alpha d}$ directly. This is described in \cite[Section~4.4]{D08} and can be done in the following way. Let $C_1, \ldots, C_{\alpha d}$ be generating matrices of an  $\alpha d$-dimensional digital sequence and let $\vec{c}_{j,k}$ denote the $k$-th row of matrix $C_j$. Define matrices $E_1,\ldots, E_d$, where the $k$-th row of matrix $E_j$ is given by $\vec{e}_{j,k}$, in the following way. For all $j \in \{1,\ldots,d\}$, $u \in \NN_0$ and $v \in \{1,\ldots,\alpha\}$ let
\begin{equation*}
\vec{e}_{j,u\alpha + v} := \vec{c}_{(j-1) \alpha + v, u+1}
\end{equation*}
If $C_1, \ldots, C_{\alpha d}$ are the generating matrices of a digital sequence $\cS_{\alpha d}$ in dimension $\alpha d$, then the matrices $E_1,\ldots, E_d$ defined above, are the generating matrices of $\mathscr{D}_\alpha^d(\cS_{\alpha d})$. Thus one can also obtain generating matrices $E_1,\ldots, E_d \in \ZZ_2^{\mathbb{N} \times \mathbb{N}}$ which generate a digital sequence satisfying Theorem~\ref{thm1}.

The following result follows from \cite[Theorem~4.11 and Theorem~4.12]{D07} (where we set $\alpha = d$).

\begin{proposition}
If $\cS_{\alpha d}$ is an order 1 digital $(t',\alpha d)$-sequence over $\ZZ_2$, then $\mathscr{D}_\alpha^d(\cS_{\alpha d})$ is an order $\alpha$ digital $(t,d)$-sequence over $\ZZ_2$ with $$t = \alpha t' + d {\alpha \choose 2}.$$
\end{proposition}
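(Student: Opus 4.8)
\medskip

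\noindent\textbf{Proof plan.} The plan is to unfold Definition~\ref{def_seq} and reduce the claim about sequences to the already-available finite interlacing statement for nets. By Definition~\ref{def_seq}, it suffices to show that for every integer $m$ with $m > t/\alpha$ the left-upper $\alpha m \times m$ submatrices $E_{1,\alpha m \times m},\ldots,E_{d,\alpha m \times m}$ of the interlaced matrices satisfy the order $\alpha$ digital $(t,m,d)$-net property of Definition~\ref{def_net}. So I would fix such an $m$. Since $t/\alpha = t' + \frac{d}{\alpha}{\alpha \choose 2} = t' + \frac{d(\alpha-1)}{2} \ge t'$, every admissible $m$ also satisfies $m > t'$; hence the hypothesis, together with Definition~\ref{def_seq} applied with interlacing order $1$, yields that the left-upper $m \times m$ submatrices $C_{1,m\times m},\ldots,C_{\alpha d,m\times m}$ satisfy the order $1$ digital $(t',m,\alpha d)$-net property.

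Next I would record the elementary bookkeeping fact that the digit interlacing of generating matrices commutes with truncation in the relevant way. From the defining relation $\vec{e}_{j,u\alpha+v} := \vec{c}_{(j-1)\alpha+v,\,u+1}$, as $u$ runs over $\{0,\ldots,m-1\}$ and $v$ over $\{1,\ldots,\alpha\}$ the index $u\alpha+v$ runs bijectively over $\{1,\ldots,\alpha m\}$ while $u+1$ runs over $\{1,\ldots,m\}$. Restricting moreover to the first $m$ columns, this shows that $E_{j,\alpha m \times m}$ is exactly the $\alpha$-fold digit interlacing of the matrices $C_{(j-1)\alpha+1,m\times m},\ldots,C_{j\alpha,m\times m}$. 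Consequently $E_{1,\alpha m \times m},\ldots,E_{d,\alpha m \times m}$ are precisely the matrices produced from $C_{1,m\times m},\ldots,C_{\alpha d,m\times m}$ by the finite interlacing construction with interlacing factor $\alpha$.

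Then I would invoke Dick's finite interlacing theorem \cite[Theorems~4.11 and~4.12]{D07} (with Dick's interlacing parameter taken equal to $\alpha$): if $\alpha d$ generating matrices of dimension $m\times m$ satisfy the order $1$ digital $(t',m,\alpha d)$-net property, then their $\alpha$-fold digit interlacing yields $d$ generating matrices of dimension $\alpha m\times m$ satisfying the order $\alpha$ digital $(t,m,d)$-net property with $t = \alpha t' + d{\alpha \choose 2}$. Applying this to $C_{1,m\times m},\ldots,C_{\alpha d,m\times m}$ and using the identification from the previous step gives the order $\alpha$ digital $(t,m,d)$-net property for $E_{1,\alpha m\times m},\ldots,E_{d,\alpha m\times m}$. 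Since this holds for every integer $m > t/\alpha$, Definition~\ref{def_seq} then shows that the digital sequence with generating matrices $E_1,\ldots,E_d$, which is exactly $\mathscr{D}_\alpha^d(\cS_{\alpha d})$, is an order $\alpha$ digital $(t,d)$-sequence over $\ZZ_2$.

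The genuinely substantial ingredient is the finite interlacing theorem of \cite{D07,D08}, which is external input encoding the linear-algebra combinatorics behind Definition~\ref{def_net}; within the present argument nothing is deep, but two points must be checked carefully: the compatibility of interlacing with truncation in the second step (so that the finite theorem can be applied blockwise), and the inequality $t/\alpha \ge t'$ in the first step (so that the range of $m$ for which one needs the conclusion is contained in the range for which the hypothesis supplies the order $1$ net property). I expect the only mildly delicate part to be phrasing the truncation-commutes-with-interlacing identity cleanly enough that the cited finite theorem applies verbatim.
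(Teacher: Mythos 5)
Your proposal is correct and follows essentially the same route as the paper, which likewise derives the proposition directly from Dick's interlacing results \cite[Theorems~4.11 and~4.12]{D07}; you merely make explicit the routine reduction (truncation commutes with interlacing, and $m>t/\alpha$ implies $m>t'$) that the paper leaves implicit.
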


For the construction based on Sobol's and Niederreiter's sequence introduced above we have $$t=\sum_{j=1}^d (e_j-1)$$ (see  \cite[Section~8.1]{DP10} for details) and therefore we obtain explicit constructions of order $\alpha$ digital $(t,d)$-sequences over $\ZZ_2$ with
\begin{equation*}
t = \alpha \sum_{j=1}^{\alpha d} (e_j-1) + d {\alpha \choose 2}.
\end{equation*}

Note that for the construction introduced above we have $c_{j,k, l} = 0$ for all $k > l$. Using the interlacing construction we obtain generating matrices $E_1,\ldots, E_d$ with $E_j = (e_{j,k,l})_{k,l \in \mathbb{N}}$ and 
\begin{equation}\label{matE:null}
e_{j,k,l} = 0\quad \mbox{for all $k > \alpha l$.}
\end{equation} 

Now we propose that every order $\alpha$ digital $(t,d)$-sequence over $\ZZ_2$ with $\alpha \ge 5$ that is constructed by interlacing of order 1 digital sequence with matrices of the form \eqref{def:gNs} has the optimal order of periodic $L_2$-discrepancy. In a nutshell, we formulate the following more concrete version of our main result (Theorem~\ref{thm1}). 

\begin{theorem}\label{thm2}
Let $d \in \NN$, $d \ge 2$. Let $\cS_{\alpha d}$ be a  usual (i.e., order 1) digital $(t,\alpha d)$-sequence over $\ZZ_2$ obtained from  matrices that are constructed like in \eqref{def:gNs} with $\alpha \ge 5$. Then we have 
\begin{align*}
 L_{2,N}^{{\rm per}}(\mathscr{D}_{\alpha}^d(\cS_{\alpha d})) \lesssim_{d}  \frac{(\log N)^{(d-1)/2}}{N} \sqrt{S(N)}\ \ \ \ \mbox{ for all $N\ge 2$.}
\end{align*}
\end{theorem}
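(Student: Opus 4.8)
The plan is to convert the trigonometric expression \eqref{pr_dia} into a dyadic (Walsh) one, to split the first $N$ points of $\cS_d=\mathscr D_\alpha^d(\cS_{\alpha d})$ according to the binary digits of $N$, and then to exploit that each block is a higher order digital net over $\ZZ_2$. As a first step one passes, by the by-now standard correspondence between the diaphony of a dyadic point set and its Walsh--Fourier coefficients (in the spirit of \cite{HOe,HKP20,KP22b}), from \eqref{pr_dia} to a bound of the shape
$$
L_{2,N}^{{\rm per}}(\cP)^2\ \lesssim_d\ \sum_{\bsk\in\NN_0^d\setminus\{\bszero\}} w(\bsk)\,\Big|\frac1N\sum_{n=0}^{N-1}\wal_{\bsk}(\bsx_n)\Big|^2,
$$
where $\wal_{\bsk}$ is the $d$-dimensional dyadic Walsh function of index $\bsk$, and $w(\bsk)=\prod_{j=1}^d \varrho(k_j)$ is a product weight with $\varrho(0)=1$ and $\varrho(k)\asymp k^{-2}$ for $k\in\NN$ (so that $w$ plays the role of $1/r(\bsk)^2$ in \eqref{pr_dia}). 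For the digital sequence $\cS_d$ with generating matrices $E_1,\dots,E_d$ one has $\wal_{\bsk}(\bsx_n)=(-1)^{\bsv(\bsk)\cdot\vec n}$, the pairing being over $\ZZ_2$, where $\bsv(\bsk)\in\ZZ_2^{\NN}$ is obtained by applying $E_j^\top$ to the digit vector of $k_j$ and summing over $j$. By the interlacing Proposition, $\cS_d$ is an order $\alpha$ digital $(t^\ast,d)$-sequence over $\ZZ_2$ (with $t^\ast=\alpha t+d\binom{\alpha}{2}$), and hence for every $m\in\NN$ with $m>t^\ast/\alpha$ the point set $P_m$ consisting of its first $2^m$ points is an order $\alpha$ digital $(t^\ast,m,d)$-net over $\ZZ_2$; this guarantees both that $\bsv(\bsk)=\bszero$ only for $\bsk=\bszero$, and that $\{\bsk:\ \text{the first $m$ coordinates of }\bsv(\bsk)\text{ vanish}\}$ is exactly the dual net $P_m^\perp$.

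Write now $N=2^{n_1}+\dots+2^{n_r}$ with $n_1>\dots>n_r\ge 0$, so that $r=S(N)$, and split $\{0,\dots,N-1\}$ into the consecutive blocks $B_i=\{a_i,\dots,a_i+2^{n_i}-1\}$, $a_i:=\sum_{l<i}2^{n_l}$. Since $a_i$ is a multiple of $2^{n_i}$ and $n\mapsto\wal_{\bsk}(\bsx_n)$ is a dyadic character, a short computation gives $\sum_{n\in B_i}\wal_{\bsk}(\bsx_n)=\varepsilon_i(\bsk)\,2^{n_i}\,\mathbf{1}[\bsk\in P_{n_i}^\perp]$ with signs $\varepsilon_i(\bsk)\in\{-1,+1\}$, so that $\frac1N\sum_{n<N}\wal_{\bsk}(\bsx_n)=\frac1N\sum_{i:\,\bsk\in P_{n_i}^\perp}\varepsilon_i(\bsk)\,2^{n_i}$. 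Discarding the signs, using that $P_{n_i}^\perp\cap P_{n_{i'}}^\perp=P_{\max(n_i,n_{i'})}^\perp$, and expanding the square, one arrives at
$$
L_{2,N}^{{\rm per}}(\cS_d)^2\ \lesssim_d\ \frac1{N^2}\sum_{i,i'=1}^{r}2^{n_i+n_{i'}}\sum_{\bsk\in P_{\max(n_i,n_{i'})}^\perp\setminus\{\bszero\}} w(\bsk).
$$
The key estimate, which I expect to be the main obstacle, is
$$
\sum_{\bsk\in P_m^\perp\setminus\{\bszero\}} w(\bsk)\ \lesssim_d\ \frac{m^{d-1}}{2^{2m}}\qquad(m\in\NN).
$$
This is the point where the hypothesis $\alpha\ge 5$ and the explicit form \eqref{def:gNs} enter. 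The order $\alpha$ digital $(t^\ast,m,d)$-net property forces every nonzero $\bsk\in P_m^\perp$ to have $\sum_{j=1}^d\mu_\alpha(k_j)\ge\alpha m-t^\ast$, where $\mu_\alpha(k)$ is the sum of the positions of the $\alpha$ most significant nonzero bits of $k$ (or of all of them, if there are fewer than $\alpha$); the weight $w$, on the other hand, only reflects the single leading bit, decaying like $\prod_{j:k_j\ne 0}2^{-2\mu_1(k_j)}$. Converting the $\mu_\alpha$–constraint into the required decay of the $w$–sum, with exactly the Roth-type exponent $d-1$ (and not $d$) and uniformly in $m$, requires a careful counting of dyadic vectors by their leading-bit patterns; the value $\alpha\ge5$ is what makes the associated generating function converge with the correct exponent. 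This estimate is the dyadic counterpart of the core computation for the standard $L_2$-discrepancy of higher order digital sequences in \cite{DP14} (see also \cite[Chapter~16]{DP10}), but it must be redone for the weight arising from the periodic $L_2$-discrepancy. A further, more technical point is that the bridge in the first step cannot be a pointwise domination of the periodic reproducing kernel $\prod_j(1+3B_2(\{x_j-y_j\}))$ by a non-negative dyadic kernel (carries prevent this), so that step is itself a bilinear-form estimate on the Walsh--Fourier coefficients of $x\mapsto e(hx)$.

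Granting the key estimate, substitute $m=\max(n_i,n_{i'})$ and use $2^{n_i+n_{i'}}2^{-2\max(n_i,n_{i'})}=2^{-|n_i-n_{i'}|}$ to obtain
$$
L_{2,N}^{{\rm per}}(\cS_d)^2\ \lesssim_d\ \frac1{N^2}\sum_{i,i'=1}^{r}\max(n_i,n_{i'})^{d-1}\,2^{-|n_i-n_{i'}|}.
$$
As $n_1>\dots>n_r$ are distinct non-negative integers, for each fixed $i$ the sum over $i'$ of $\max(n_i,n_{i'})^{d-1}2^{-|n_i-n_{i'}|}$ is dominated by a convergent geometric series, hence is $\lesssim_d n_i^{d-1}+1$; therefore the double sum is $\lesssim_d\sum_{i=1}^{r}(n_i^{d-1}+1)\le r\,(n_1^{d-1}+1)\lesssim_d S(N)\,(\log N)^{d-1}$ for all $N\ge 2$ (recall $d\ge 2$). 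Taking square roots yields $L_{2,N}^{{\rm per}}(\cS_d)\lesssim_d \sqrt{S(N)}\,(\log N)^{(d-1)/2}/N$; the finitely many small $N$, and the blocks with $n_i\le t^\ast/\alpha$ — for which $P_{n_i}$ need not be a net but which together contribute only $O(1/N)$ — are absorbed into the implied constant. The decisive feature is that expanding the square and invoking the key estimate produces the geometric factor $2^{-|n_i-n_{i'}|}$, which collapses the double sum over the $r$ blocks essentially to a single sum; this is exactly what replaces the factor $S(N)$ that a bare triangle inequality over the blocks would give by the factor $\sqrt{S(N)}$ claimed in the theorem.
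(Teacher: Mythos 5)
Your overall skeleton (split the first $N$ points into blocks of sizes $2^{n_i}$, use the character property so that each block contributes $2^{n_i}$ times a sign on its dual net, nested duals, and the final double sum over blocks collapsing to $r=S(N)$) is the same as the paper's. But the central reduction you build everything on — the ``bridge'' $L_{2,N}^{\rm per}(\cP)^2\lesssim_d\sum_{\bsk\neq\bszero}w(\bsk)\bigl|\frac1N\sum_n\wal_{\bsk}(\bsx_n)\bigr|^2$ with a \emph{product} weight $w(\bsk)=\prod_j\varrho(k_j)$, $\varrho(k)\asymp 2^{-2\mu(k)}$ — is exactly what is not available, and you do not prove it. The Walsh expansion of the periodic kernel is genuinely non-diagonal: by Lemma~\ref{le:rho} the coefficients $\rho(k,l)$ are nonzero not only for $k=l$ but also when $k'=l'$, $k''=l$ or $k=l''$, and for fixed $k$ there are \emph{infinitely many} $l$ with $l'=k'$, each with $\rho(k,l)2^{a_1+b_1}=1/4$; so no Schur-type argument reduces the bilinear form to its diagonal with these weights. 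Worse, your proposed bridge would be too strong: your ``key estimate'' $\sum_{\bsk\in P_m^{\perp}\setminus\{\bszero\}}w(\bsk)\lesssim_d m^{d-1}2^{-2m}$ uses only the order~$1$ $(t,m,d)$-net property (count $\bsk$ with $\mu(\bsk)=z>m-t$ and sum $2^{-2z}$), so the bridge plus your key estimate would prove the optimal bound for \emph{every} order~$1$ digital net and sequence, making the hypothesis $\alpha\ge5$ and the interlacing superfluous. This contradicts the known behaviour of plain order~$1$ nets: e.g.\ the two-dimensional Hammersley/Roth net has periodic $L_2$-discrepancy (diaphony) of order $\log N/N$, not $\sqrt{\log N}/N$ (cf.\ \cite{HKP20,g99}), which is precisely why the paper needs higher order sequences. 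You have also misidentified where $\alpha\ge5$ enters: it is not needed for your key estimate at all.

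The paper avoids any diagonal domination. It keeps the full bilinear form $\sum_{\bsk,\bsell}\rho(\bsk,\bsell)(\cdots)$ (Lemmas~\ref{lem_r} and~\ref{lem_l2formula}), bounds $|\rho(\bsk,\bsell)|\le 2^{-\mu(\bsk)-\mu(\bsell)}$ \emph{only on the restricted support} described in Lemma~\ref{le:rho}, and then counts pairs $(\bsk,\bsell)\in\Dcal_i^{\ast}\times\Dcal_{i'}^{\ast}$ with $\rho(\bsk,\bsell)\neq0$ exactly as in \cite{DP14}: the order $2$ and $3$ net properties control, for fixed $\bsk$, the number of admissible $\bsell$ (whose leading one or two binary digits may differ from those of $\bsk$), and the order $5$ property shows the pair sets $\Ecal_{i,i'}(z)$ are empty unless $z\gtrsim\tfrac14\max\{5m_i+3m_{i'},\,3m_i+5m_{i'}\}-t$, which is what produces the geometric decay in $|m_i-m_{i'}|$ and hence the single factor $S(N)$. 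In short: your block decomposition and final summation mirror the paper, but the first step of your plan is an unsubstantiated (and, in this generality, false) inequality, and the genuine work — handling the off-diagonal Walsh coefficients of the periodic kernel via the higher order net properties — is missing.
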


\section{The Proof of the Results}\label{sec:aux}

Obviously, Theorem~\ref{thm2} implies Theorem~\ref{thm1}. The proof of Theorem~\ref{thm2} will be based on a Walsh series representation of the squared periodic $L_2$-discrepancy.

\paragraph{Walsh functions.} For $k \in \NN_0$ the $k$-th Walsh function (in base 2) $\wal_{k}:[0,1) \rightarrow \{-1,1\}$ is defined in the following way: let $k$ have binary representation
\[
   k = \kappa_{a-1} 2^{a-1} + \cdots + \kappa_1 2 + \kappa_0,
\]
with $\kappa_i \in \{0,1\}$, and let $x \in [0,1)$ have binary expansion $$x =
\frac{\xi_1}{2}+\frac{\xi_2}{2^2}+\cdots $$ with $\xi_i \in \{0,1\}$ (unique in the sense that
infinitely many of the $\xi_i$ must be zero), then
\[
  \wal_{k}(x) := (-1)^{\xi_1 \kappa_0 + \cdots + \xi_a \kappa_{a-1}}.
\]
 
For dimension $d \geq 2$, vectors $\bsk = (k_1, \ldots, k_d) \in \NN_0^d$ and $\bsx =
(x_1,\ldots,x_d) \in [0,1)^d$ we write
\[
   \wal_{\bsk}(\bsx) := \prod_{j=1}^d\wal_{k_j}(x_j).
\]
A collection of useful properties of Walsh functions can be found in \cite[Appendix~A]{DP10}.

We call $x\in [0,1)$ a dyadic rational if it can be written in a finite binary expansion. By $\oplus$ we denote the digit-wise addition modulo $2$. In order to avoid ambiguities and since it suffices for our purpose we define $\oplus$ just for dyadic rationals. For dyadic rationals $x = \sum_{i=1}^{\infty}
\frac{\xi_i}{2^i}$ and $y = \sum_{i=1}^{\infty} \frac{\eta_i}{2^i}$ with $\xi_i,\eta_i \in \{0,1\}$, where both digit sequences  $(\xi_i)_{i \ge 1}$ and $(\eta_i)_{i \ge 1}$ become eventually zero, we put
$$ x \oplus y := \sum_{i=0}^{\infty } \frac{\zeta_i}{2^i}, \; \; \;{\rm where}
\; \; \; \zeta_i := \xi_i + \eta_i \mypmod{2}.$$  Obviously $x \oplus y$ is then also a dyadic rational. For vectors $\bsx, \bsy \in [0,1)^d$ whose components are dyadic rationals we set $\bsx \oplus \bsy := (x_1 \oplus y_1, \ldots, x_d \oplus y_d)$. 

For any $\bsk \in \NN_0^d$ and $\bsx,\bsy \in [0,1)^d$ whose components are dyadic rationals we have 
\begin{equation}\label{wal:mult}
\wal_{\bsk}(\bsx \oplus \bsy)=\wal_{\bsk}(\bsx)\wal_{\bsk}(\bsy)
\end{equation}

It can be shown (see \cite[Lemma~4.72]{DP10}) that any digital net $\cP_{2^m,d}$ (see Remark~\ref{re:dnet}) is a subgroup of $([0,1)^d,\oplus)$ and furthermore, all components of every point of $\cP_{2^m,d}$ are dyadic rationals. From \eqref{wal:mult} it follows that $\wal_{\bsk}$ is a character of the group $(\cP_{2^m,d},\oplus)$. Hence, for any digital net $\cP_{2^m,d}$ with generating matrices $C_1,\ldots,C_d \in \ZZ_2^{m \times m}$ and any $\bsk=(k_1,\ldots,k_d) \in \NN_0^d$ it follows that 
\begin{equation}\label{charprop}
\frac{1}{2^m}\sum_{h=0}^{2^m-1} \wal_{\bsk}(\bsx_h)=\left\{
\begin{array}{ll}
1 & \mbox{ if } C_1^{\top} \vec{k_1}+\cdots +C_d^{\top} \vec{k_d}=\vec{0},\\
0 & \mbox{ otherwise}. 
\end{array}\right.
\end{equation}
For a proof of this fact we refer to \cite[Lemma~ 4.75]{DP10}. This property is called the {\it character property of digital nets}.

\paragraph{Walsh series expansion of the periodic $L_2$-discrepancy.} The starting point is the representation of the periodic $L_2$-discrepancy in terms of exponential sums.  Re-writing  \eqref{pr_dia} we obtain 
\begin{equation*}
(L_{2,N}^{{\rm per}}(\cP))^2 =  -\frac{1}{3^d}  + \frac{1}{3^d N^2} \sum_{n,p=0}^{N-1}  K_d(\bsx_n,\bsx_p),
\end{equation*}
where $$K_d(\bsx,\bsy):=   \sum_{\bsh \in \ZZ^d} \frac{1}{r(\bsh)^2} \exp(2 \pi \icomp \bsh \cdot (\bsx-\bsy))=\prod_{j=1}^d \left( \sum_{h = -\infty}^{\infty} \frac{1}{r(h)^2} \exp(2 \pi \icomp h (x_j-y_j)\right).$$
Now we expand $K_d(\bsx,\bsy)$ into a Walsh series. We have $$K_d(\bsx,\bsy)=\sum_{\bsk,\bsell \in \NN_0^d} \rho(\bsk,\bsell) \wal_{\bsk}(\bsx) \wal_{\bsell}(\bsy),$$ where $$\rho(\bsk,\bsell)=\int_{[0,1]^{2d}} K_d(\bsx,\bsy) \wal_{\bsk}(\bsx) \wal_{\bsell}(\bsy) \rd \bsx \rd \bsy.$$

Due to the multiplicative structure of $K_d$ and of the multi-dimensional Walsh functions, for $\bsk=(k_1,\ldots,k_d)$ and $\bsell=(l_1,\ldots,l_d)$ in $\NN_0^d$ we have $$\rho(\bsk,\bsell)=\prod_{j=1}^d \rho(k_j,l_j),$$
where, for $d=1$ and $k,l \in \NN_0$, 
\begin{eqnarray*}
\rho(k,l) & = & \int_0^1 \int_0^1  K_1(x,y) \wal_{k_j}(x) \wal_{l_j}(y) \rd x \rd y\\
& = & \int_0^1 \int_0^1 \sum_{h=-\infty}^{\infty} \frac{1}{r(h)^2} \exp(2\pi \icomp h(x-y)) \wal_k(x) \wal_{l}(y) \rd x \rd y\\
& = & \sum_{h=-\infty}^{\infty} \frac{1}{r(h)^2}  \left(\int_0^1 \exp(2 \pi \icomp hx) \wal_k(x)  \rd x \right)\left(\int_0^1 \exp(-2 \pi \icomp h y) \wal_l(y)  \rd y \right)\\
& = & \sum_{h=-\infty}^{\infty} \frac{1}{r(h)^2}  \left(\int_0^1 \exp(2 \pi \icomp hx) \wal_k(x)  \rd x \right)\overline{\left(\int_0^1 \exp(2 \pi \icomp h y) \wal_l(y)  \rd y \right)}.
\end{eqnarray*}  
Put $$\beta_{h,k}:=\int_0^1 \exp(2 \pi \icomp hx) \wal_k(x)  \rd x.$$ Note that $\beta_{0,0}=1$, $\beta_{0,k}=0$ for $k \in \NN$ and $\beta_{h,0}=0$ for $h \in \ZZ\setminus\{0\}$. Then we can write $$\rho(k,l) =  \sum_{h=-\infty}^{\infty} \frac{\beta_{h,k} \overline{\beta}_{h,l}}{r(h)^2}$$ and, in particular,  $\rho(0,0)=1$ and $\rho(0,l)=\rho(k,0)=0$ for $k,l \in \NN$. For $k,l \in \NN$ we have $$\rho(k,l) =  \sum_{h=-\infty\atop h \not=0}^{\infty} \frac{\beta_{h,k} \overline{\beta}_{h,l}}{r(h)^2}=\frac{6}{4 \pi^2} \sum_{h=-\infty\atop h \not=0}^{\infty} \frac{\beta_{h,k} \overline{\beta}_{h,l}}{h^2}=6 \gamma_2(k,l),$$ where $\gamma_2(k,l)$ are defined in \cite[Eq.~(14.17) on p.~453]{DP10} as the Walsh coefficients of the normalized and periodically extended second Bernoulli polynomial. These coefficients are computed in \cite[Lemma~14.17]{DP10} and from there we obtain:

\begin{lemma}\label{le:rho}
We have $\rho(0,0)=1$ and $\rho(k,0) = \rho(0,l) = 0$ for all $k,l \in \NN$. For $k \in \NN$ write 
$k=2^{a_1-1}+2^{a_2-1}+\cdots+2^{a_v-1}$ with $v \in \NN$ and $a_1>a_2> \dots > a_v \ge 1$, and $k'=k-2^{a_1-1}$ and $k''=k-2^{a_1-1}-2^{a_2-1}=k'-2^{a_2-1}$. Likewise, for $l \in \NN$ write $l=2^{b_1-1}+2^{b_2-1}+\cdots + 2^{b_w-1}$ with $w \in \NN$ and $b_1 > b_2 > \dots > b_w \ge 1$, and $l'=l-2^{b_1-1}$ and $l''=l-2^{b_1-1}-2^{b_2-1}=l'-2^{b_2-1}$. Then for all $k,l \in \NN$ we have
\begin{equation*}
\rho(k,l) = \left\{\begin{array}{ll}  2^{-2 a_1} & \mbox{if } k = l, \\
2^{-a_1-b_1-2} & \mbox{if } k' = l' > 0 \mbox{ and } k \neq l, \\  
-2^{-a_1-a_2-2} & \mbox{if } k'' =  l,  \\
-2^{-b_1-b_2-2}& \mbox{if } k = l'',  \\ 
0 & \mbox{otherwise}.  
\end{array}\right.
\end{equation*}
\end{lemma}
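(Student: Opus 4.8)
The plan is to reduce the statement to a Walsh‑coefficient computation for the second Bernoulli polynomial that is already in the literature. From the discussion preceding the lemma one has, for $k,l\in\NN$, the identity $\rho(k,l)=6\gamma_2(k,l)$, where $\gamma_2(k,l)$ is the Walsh coefficient in the index pair $(k,l)$ of the normalized, periodically extended second Bernoulli polynomial fixed in \cite[Eq.~(14.17)]{DP10}, together with $\rho(0,0)=1$ and $\rho(k,0)=\rho(0,l)=0$ for $k,l\in\NN$. Thus the cases with a zero index are already settled, and for $k,l\in\NN$ it remains to substitute the explicit values of $\gamma_2(k,l)$ provided by \cite[Lemma~14.17]{DP10} and multiply by $6$. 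Along this route the only real work is to match the normalization of \cite{DP10} with the constant $6$ produced here by $r(h)^2=4\pi^2h^2/6$ for $h\neq 0$ — equivalently, to pin down the exponent of $2$ in each of the four cases — after which the alternatives $k=l$, $k'=l'>0$ with $k\neq l$, $k''=l$, and $k=l''$ drop out by inspection.

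For completeness I would also sketch a self‑contained derivation. Summing the geometric series in $K_1$ gives $$K_1(x,y)=1+3\,\overline{B}_2(x-y),$$ where $\overline{B}_2$ is the $1$-periodic extension of $t\mapsto t^2-t+\tfrac16$, so that for $k,l\in\NN$ one has $\rho(k,l)=3\int_0^1\int_0^1\overline{B}_2(x-y)\wal_k(x)\wal_l(y)\rd x\rd y$. One then evaluates this two‑dimensional Walsh coefficient by a dyadic scale argument: writing $a_1,b_1$ for the indices with $2^{a_1-1}\le k<2^{a_1}$ and $2^{b_1-1}\le l<2^{b_1}$ as in the statement, $\wal_k$ and $\wal_l$ are constant on dyadic intervals of lengths $2^{-a_1}$ and $2^{-b_1}$, so the integral splits into a sum over dyadic rectangles on each of which $\overline{B}_2(x-y)$ is an explicit quadratic; carrying out these elementary integrals and collecting terms across the scales produces, after cancellation, exactly the four closed forms (and $0$ otherwise). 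An equivalent route uses the Walsh--Fourier coefficients $\beta_{h,k}=\int_0^1\exp(2\pi\icomp h x)\wal_k(x)\rd x$ introduced above, for which there is an explicit formula — in particular $\beta_{h,k}$ vanishes unless the $2$-adic valuation of $h$ equals the position of the lowest set bit of $k$ — so that $\rho(k,l)=\tfrac{6}{4\pi^2}\sum_{h\neq 0}\beta_{h,k}\overline{\beta}_{h,l}/h^2$ reduces to a dominant term plus a convergent geometric correction.

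The main obstacle is not the bookkeeping of constants but the combinatorial identification of when $\rho(k,l)\neq 0$: the conditions $k'=l'$, $k''=l$ and $k=l''$ express that $k$ and $l$ must agree except possibly in their top one or two binary digits, and seeing why precisely these patterns survive while everything else cancels is the heart of the matter — which is exactly what the computation behind \cite[Lemma~14.17]{DP10} already carries out. I would therefore present the short citation‑based argument as the proof, referring the reader who wants the full dyadic computation to the proof of \cite[Lemma~14.17]{DP10}, and I might record $K_1(x,y)=1+3\,\overline{B}_2(x-y)$ as a remark, since it also makes the diaphony interpretation transparent.
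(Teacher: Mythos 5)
Your proposal matches the paper's own argument: the paper likewise obtains the zero-index cases directly from the $\beta_{h,k}$ computation and then proves the lemma by writing $\rho(k,l)=\frac{6}{4\pi^2}\sum_{h\neq 0}\beta_{h,k}\overline{\beta}_{h,l}/h^2=6\gamma_2(k,l)$ and quoting the explicit Walsh coefficients of the normalized periodic second Bernoulli polynomial from \cite[Lemma~14.17]{DP10}. Your additional self-contained sketch via $K_1(x,y)=1+3\overline{B}_2(x-y)$ is a correct but optional supplement; the citation-based argument you settle on is exactly what the paper does.
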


Using the Walsh series expansion of $K_d(\cdot,\cdot)$ we can write the periodic $L_2$-discrepancy  in the following way:

\begin{lemma}\label{lem_r}
For any point set $\cP_{N,d}=\{\bsx_0,\bsx_1,\ldots, \bsx_{N-1}\}$ in $[0,1)^d$ we have
\begin{eqnarray}\label{fo:perL2wal}
(L_{2,N}^{{\rm per}}(\cP_{N,d}))^2  = \frac{1}{3^d N^2} \sum_{n,p=0}^{N-1} \sum_{\bsk,\bsell \in \NN_0^d \setminus\{ \bszero\}} \rho(\bsk,\bsell) \wal_{\bsk}(\bsx_n) \wal_{\bsell}(\bsx_p). 
\end{eqnarray}
where $\bsk = (k_1,\ldots, k_d)$, $\bsell = (l_1,\ldots, l_d)$,
$\rho(\bsk,\bsell) := \prod_{j=1}^d \rho(k_j,l_j)$, where, for $k, l \in \NN_0$ the coefficients $\rho(k,l)$ are given in Lemma~\ref{le:rho}. 
\end{lemma}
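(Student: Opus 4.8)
The plan is to combine two facts already derived in the excerpt: the exponential-sum representation
$$
(L_{2,N}^{{\rm per}}(\cP_{N,d}))^2 = -\frac{1}{3^d} + \frac{1}{3^d N^2}\sum_{n,p=0}^{N-1} K_d(\bsx_n,\bsx_p),
$$
and the Walsh expansion $K_d(\bsx,\bsy)=\sum_{\bsk,\bsell\in\NN_0^d}\rho(\bsk,\bsell)\wal_{\bsk}(\bsx)\wal_{\bsell}(\bsy)$ with $\rho(\bsk,\bsell)=\prod_{j=1}^{d}\rho(k_j,l_j)$ and the one-dimensional coefficients $\rho(k,l)$ of Lemma~\ref{le:rho}. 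I would then (a) justify substituting this Walsh expansion, evaluated termwise at the points $\bsx_n,\bsx_p$, and interchanging the resulting double series with the \emph{finite} sum $\tfrac{1}{3^dN^2}\sum_{n,p}$; (b) isolate the multi-index $(\bsk,\bsell)=(\bszero,\bszero)$ and note that it cancels the additive constant $-1/3^d$; and (c) note that every remaining term in which exactly one of $\bsk,\bsell$ equals $\bszero$ vanishes, leaving precisely the sum over $\bsk,\bsell\in\NN_0^d\setminus\{\bszero\}$.

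For step (a) the cleanest route is absolute summability of the Walsh coefficients. From the explicit values in Lemma~\ref{le:rho} one checks that for a fixed $k\in\NN$ whose two leading binary digits are $2^{a_1-1}>2^{a_2-1}$ (the second absent when $k$ is a power of two) one has $\sum_{l\in\NN_0}|\rho(k,l)|\lesssim 2^{-2a_1}+2^{-a_1-a_2}$; summing this over the $2^{a_2-1}$ integers $k$ with a prescribed pair $(a_1,a_2)$, then over $1\le a_2<a_1$, then over $a_1\ge1$, yields $\sum_{k,l\in\NN_0}|\rho(k,l)|<\infty$, and hence $\sum_{\bsk,\bsell\in\NN_0^d}|\rho(\bsk,\bsell)|$, being the $d$-th power of that quantity, is finite. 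Thus the series $\sum_{\bsk,\bsell}\rho(\bsk,\bsell)\wal_{\bsk}(\bsx)\wal_{\bsell}(\bsy)$ converges absolutely and uniformly on $[0,1)^{2d}$; its sum at any $(\bsx,\bsy)$ equals the limit, as $m\to\infty$, of the square partial sum over $\bsk,\bsell\in\{0,\dots,2^m-1\}^d$, which, being the $L_2$-projection of the continuous function $K_d=\prod_{j=1}^d K_1$ onto the functions constant on dyadic boxes of side $2^{-m}$, is just the average of $K_d$ over the dyadic box of side $2^{-m}$ containing $(\bsx,\bsy)$; by continuity of $K_d$ this average converges to $K_d(\bsx,\bsy)$. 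Hence the Walsh expansion holds pointwise for every $(\bsx,\bsy)$, in particular at the (dyadic) pairs $(\bsx_n,\bsx_p)$, and absolute convergence also legitimises exchanging $\sum_{\bsk,\bsell}$ with the finite sum $\sum_{n,p}$.

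Carrying out the exchange gives
$$
(L_{2,N}^{{\rm per}}(\cP_{N,d}))^2 = -\frac{1}{3^d} + \frac{1}{3^d}\sum_{\bsk,\bsell\in\NN_0^d}\rho(\bsk,\bsell)\,\frac{1}{N^2}\sum_{n,p=0}^{N-1}\wal_{\bsk}(\bsx_n)\wal_{\bsell}(\bsx_p).
$$
Here the term $(\bsk,\bsell)=(\bszero,\bszero)$ equals $\tfrac{1}{3^d}\rho(\bszero,\bszero)\tfrac{1}{N^2}\sum_{n,p}1 = \tfrac{1}{3^d}$, since $\rho(\bszero,\bszero)=1$ and $\wal_{\bszero}\equiv1$, and it cancels $-1/3^d$. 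If exactly one of $\bsk,\bsell$ equals $\bszero$ --- say $\bsk=\bszero$, $\bsell\neq\bszero$ --- then some coordinate $l_j\neq0$ forces the factor $\rho(0,l_j)=0$, whence $\rho(\bszero,\bsell)=0$ (Lemma~\ref{le:rho}); symmetrically for $\bsell=\bszero$. Thus only the pairs $\bsk,\bsell\in\NN_0^d\setminus\{\bszero\}$ contribute, which is exactly \eqref{fo:perL2wal}. The one step demanding genuine care is (a) --- the absolute-summability estimate for the $\rho(k,l)$ and the ensuing justification that the Walsh series of the kernel may be evaluated at the points of $\cP_{N,d}$ and summed in any order; the rest is routine bookkeeping.
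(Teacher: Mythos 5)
Your proposal is correct and follows essentially the same route as the paper, which obtains Lemma~\ref{lem_r} directly from the exponential-sum identity $(L_{2,N}^{\rm per})^2=-3^{-d}+3^{-d}N^{-2}\sum_{n,p}K_d(\bsx_n,\bsx_p)$ together with the Walsh expansion of $K_d$ and the vanishing coefficients $\rho(0,l)=\rho(k,0)=0$, the $(\bszero,\bszero)$ term cancelling $-3^{-d}$. The only difference is that you spell out the absolute summability of the coefficients and the pointwise validity of the Walsh expansion of the continuous kernel $K_d$ (via dyadic averages), a justification the paper leaves implicit; this is a sound and welcome addition, not a deviation.
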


For digital nets we can simplify the above formula further. But first introduce the notion of a digitally shifted digital net. Let $\cP_{2^m,d}=\{\bsx_0,\bsx_1,\ldots, \bsx_{2^m-1}\}$ in $[0,1)^d$ be a digital net over $\ZZ_2$ and let $\bssigma \in [0,1)^d$ (in our context we restrict to $\bssigma$ with dyadic rationals as components). Then we call the point set $\cP_{2^m,d}(\bssigma)=\{\bsx_0 \oplus \bssigma,\bsx_1 \oplus \bssigma, \ldots, \bsx_{2^m-1} \oplus \bssigma\}$ a {\it digitally shifted digital net over $\ZZ_2$}.

\begin{lemma}\label{lem_l2formula}
If $\cP_{2^m,d}$ is a digital net over $\ZZ_2$, then the squared periodic $L_2$-discrepancy of $\cP_{2^m,d}$ is given by 
\begin{equation*}
(L_{2,2^m}^{{\rm per}}(\cP_{2^m,d}))^2 =  \frac{1}{3^d} \sum_{\bsk,\bsell \in \Dcal^\ast} \rho(\bsk,\bsell),
\end{equation*}
where $\Dcal^\ast=\Dcal\setminus \{\bszero\}$ and where $\Dcal$ is the so-called dual net given by $$\Dcal=\{(k_1,\ldots,k_d) \in \NN_0^d\, : \, C_1^{\top}\vec{k}_1+\cdots+C_d^{\top}\vec{k}_d=\vec{0}\},$$ where for $k \in \NN_0$ with binary expansion $k=\kappa_0+\kappa_1 2+\kappa_2 2^2+\cdots$ we put $\vec{k}=(\kappa_0,\ldots,\kappa_{m-1})^{\top}$. Again, the coefficients $\rho(\bsk,\bsell)$ are given as in Lemma~\ref{lem_r}.

If $\cP_{2^m,d}(\bssigma)$ is a  digitally shifted digital net over $\ZZ_2$, then we have
\begin{equation*} 
(L_{2,2^m}^{{\rm per}}(\cP_{2^m,d}(\bssigma)))^2 = \frac{1}{3^d} \sum_{\bsk,\bsell \in \Dcal^\ast} \rho(\bsk,\bsell) \wal_{\bsk}(\bssigma) \wal_{\bsell}(\bssigma).
\end{equation*}
\end{lemma}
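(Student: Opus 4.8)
\textbf{Proof plan for Lemma~\ref{lem_l2formula}.}

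The plan is to start from the Walsh series formula \eqref{fo:perL2wal} in Lemma~\ref{lem_r} applied to the finite point set $\cP_{2^m,d}$ and to reorganize the double sum over the point indices $n,p$. Writing out the square of the periodic $L_2$-discrepancy gives
\[
(L_{2,2^m}^{{\rm per}}(\cP_{2^m,d}))^2 = \frac{1}{3^d} \sum_{\bsk,\bsell \in \NN_0^d \setminus\{\bszero\}} \rho(\bsk,\bsell) \left(\frac{1}{2^m}\sum_{n=0}^{2^m-1}\wal_{\bsk}(\bsx_n)\right)\left(\frac{1}{2^m}\sum_{p=0}^{2^m-1}\wal_{\bsell}(\bsx_p)\right).
\]
The key observation is that each inner average is governed by the character property of digital nets \eqref{charprop}: the average $\frac{1}{2^m}\sum_{n=0}^{2^m-1}\wal_{\bsk}(\bsx_n)$ equals $1$ if $C_1^\top\vec{k}_1+\cdots+C_d^\top\vec{k}_d = \vec{0}$, i.e.\ if $\bsk$ lies in the dual net $\Dcal$ (where the truncation $\vec{k}=(\kappa_0,\ldots,\kappa_{m-1})^\top$ is exactly the one relevant for $m\times m$ generating matrices, since higher digits of $k_j$ do not affect $C_j^\top\vec{k}_j$ when $C_j$ has $m$ columns), and equals $0$ otherwise. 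Hence the product of the two averages is $1$ precisely when both $\bsk\in\Dcal$ and $\bsell\in\Dcal$, and $0$ otherwise, so the double sum collapses to $\sum_{\bsk,\bsell\in\Dcal^\ast}\rho(\bsk,\bsell)$, where the star indicates we have already excluded $\bszero$ from the index set (and one should note $\bszero\in\Dcal$ always, so $\Dcal^\ast=\Dcal\setminus\{\bszero\}$ is consistent with removing the zero term). This establishes the first identity.

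For the digitally shifted net, I would repeat the computation with $\bsx_n$ replaced by $\bsx_n\oplus\bssigma$. Using the multiplicativity \eqref{wal:mult} of Walsh functions under the digit-wise addition $\oplus$ — valid here since all components of $\bsx_n$ are dyadic rationals (by \cite[Lemma~4.72]{DP10}) and $\bssigma$ has dyadic rational components by assumption — we get $\wal_{\bsk}(\bsx_n\oplus\bssigma)=\wal_{\bsk}(\bsx_n)\wal_{\bsk}(\bssigma)$. The factor $\wal_{\bsk}(\bssigma)$ is independent of $n$ and pulls out of the average over $n$, and similarly $\wal_{\bsell}(\bssigma)$ pulls out of the average over $p$. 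The remaining averages over $\wal_{\bsk}(\bsx_n)$ and $\wal_{\bsell}(\bsx_p)$ are again handled by \eqref{charprop} exactly as before, forcing $\bsk,\bsell\in\Dcal$. This yields
\[
(L_{2,2^m}^{{\rm per}}(\cP_{2^m,d}(\bssigma)))^2 = \frac{1}{3^d}\sum_{\bsk,\bsell\in\Dcal^\ast}\rho(\bsk,\bsell)\,\wal_{\bsk}(\bssigma)\,\wal_{\bsell}(\bssigma),
\]
as claimed.

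I do not anticipate a genuine obstacle here; the lemma is essentially a bookkeeping exercise combining \eqref{fo:perL2wal}, \eqref{wal:mult}, and \eqref{charprop}. The one point that warrants a little care is the compatibility of the index truncation: in \eqref{charprop} the vector $\vec{k}$ is the binary digit vector of $k$ truncated to $m$ coordinates, matching the $m$ columns of $C_j$, whereas in \eqref{fo:perL2wal} the sum ranges over all $\bsk\in\NN_0^d$ with no truncation. One should observe that $\wal_{\bsk}(\bsx_n)$ for a point $\bsx_n$ of a digital net with $m\times m$ generating matrices depends on $k_j$ only through its lowest $m$ digits (because $x_{j,n}$ has at most $m$ nonzero binary digits), so the character property genuinely controls the full sum in \eqref{fo:perL2wal}; equivalently, one can absorb this remark into the statement of \eqref{charprop} as given. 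With that understood, the collapse of the double Walsh sum to a sum over the dual net is immediate.
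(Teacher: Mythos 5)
Your proposal is correct and matches the paper's own proof: the paper likewise applies Lemma~\ref{lem_r}, factors the double sum over $n,p$ into the two normalized Walsh sums, collapses them via the character property \eqref{charprop}, and handles the shifted case with \eqref{wal:mult}. Your extra remark on the digit truncation in $\vec{k}$ is a harmless clarification the paper leaves implicit.
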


\begin{proof}
From Lemma~\ref{lem_r} we obtain
\begin{eqnarray*}
(L_{2,2^m}^{{\rm per}}(\cP_{2^m,d}))^2  & = & \frac{1}{3^d 2^{2m}} \sum_{n,p=0}^{2^m-1} \sum_{\bsk,\bsell \in \NN_0^d \setminus\{ \bszero\}} \rho(\bsk,\bsell) \wal_{\bsk}(\bsx_n) \wal_{\bsell}(\bsx_p)\\
& = & \frac{1}{3^d} \sum_{\bsk,\bsell \in \NN_0^d \setminus\{ \bszero\}} \rho(\bsk,\bsell) \left(\frac{1}{2^m}\sum_{n=0}^{2^m-1}\wal_{\bsk}(\bsx_n)\right)\left(\frac{1}{2^m}\sum_{p=0}^{2^m-1} \wal_{\bsell}(\bsx_p)\right).
\end{eqnarray*}
Now the first result follows from \eqref{charprop}.

The second part follows in the same manner using \eqref{wal:mult}.
\end{proof}

With Lemma~\ref{lem_r} and Lemma~\ref{lem_l2formula} we are in a comparable situation to \cite{DP14}. Instead of  \cite[Lemma~2.2 and 2.4]{DP14} for the proof of \cite[Theorem~1.1]{DP14} now Lemma~\ref{lem_r} and Lemma~\ref{lem_l2formula} will serve as starting point of the proof of the present Theorem~\ref{thm2}. The main difference is in the definition of the coefficients $\rho(\bsk,\bsl)$ here and $r(\bsk,\bsl)$ in \cite{DP14}. However, we will see that $\rho(\bsk,\bsl)$ can be estimated in the same way as $r(\bsk,\bsl)$ in \cite{DP14}. The crucial part of the proof is to estimate the periodic $L_2$-discrepancy by an expression, that appears as an upper bound on the standard $L_2$-discrepancy in \cite[Lemma~3.1]{DP14}. Then one can use exactly the same procedure like in \cite{DP14} in order to finish the proof of our Theorem~\ref{thm2}. We will show in detail how to derive the mentioned upper bound on the periodic $L_2$-discrepancy.

\begin{proof}[Proof of Theorem~\ref{thm2}]
Let $\cS_{\alpha d}$ be a digital sequence in dimension $\alpha d$ based on the construction \eqref{def:gNs} and apply the digit interlacing function $\mathscr{D}_\alpha^d$ of order $\alpha$. The resulting sequence $\cS_d:=\mathscr{D}_\alpha^d(\cS_{\alpha d}):=(\bsx_0, \bsx_1,\bsx_2,\ldots)$ in $[0,1)^d$ is an order $\alpha$ digital $(t,d)$-sequence with $t = \alpha \sum_{j=1}^{\alpha d} (e_j-1) + d {\alpha \choose 2}$. Using \eqref{eq_t_tprime}, $\cS_d$ is also an order $\alpha'$ digital $(t',d)$-sequence with $t' = \lceil t \alpha'/\alpha \rceil \le t$ for all $1 \le \alpha' \le \alpha$. Thus it is also an order $\alpha'$ digital $(t,d)$-sequence for all $1 \le \alpha' \le \alpha$. 

Let $C_1, \ldots, C_d$ denote the generating matrices of the digital sequence $\cS_d$. Let $C_{j, \mathbb{N} \times m}$ denote the first $m$ columns of $C_j$. Due to \eqref{matE:null} only the first $\alpha m$ rows of $C_{j, \mathbb{N} \times m}$ can be nonzero and hence $C_j$ is of the form
$$
C_{j} = \left( \begin{array}{ccc}
           & \vline &  \\
  C_{j,\alpha m \times m} & \vline & D_{j,\alpha m \times \NN} \\
           & \vline &   \\ \hline
    & \vline & \\
 0_{\NN \times m}  & \vline &  F_{j,\NN \times \NN} \\
   &   \vline       &
\end{array} \right) \in \ZZ_2^{\NN \times \NN},
$$  where $0_{\NN \times m}$ denotes the $\NN \times m$ zero matrix. Note that the entries of each column of the   matrix $F_{j,\NN \times \NN}$ become eventually zero.

We use Lemma~\ref{lem_r} to obtain
\begin{align*}
(L^{{\rm per}}_{2,N}(\cS_d))^2  = & \frac{1}{3^d N^2} \sum_{\bsk,\bsell \in \NN_0^d \setminus\{ \bszero\}} \rho(\bsk,\bsell) \left(\sum_{n=0}^{N-1}\wal_{\bsk}(\bsx_n)\right)\left(\sum_{p=0}^{N-1} \wal_{\bsell}(\bsx_p)\right).
\end{align*}

Let $N = 2^{m_1} + 2^{m_2} + \cdots + 2^{m_r}$ with $m_1 > m_2 > \cdots > m_r \ge 0$ (hence $r=S(N)$). We split the initial $N$ terms of the sequence $\cS_d$ into the point sets
\begin{equation*}
\cP_i:=\{\bsx_{2^{m_1}+\cdots + 2^{m_{i-1}}},\ldots, \bsx_{-1+2^{m_1} + \cdots + 2^{m_i}}\},
\end{equation*}
for $i \in \{1,\ldots,r\}$, where for $i=1$ we define $2^{m_1} + \cdots + 2^{m_{i-1}} = 0$. Any $n \in \{2^{m_1}+\cdots + 2^{m_{i-1}}, \ldots ,-1+2^{m_1} + \cdots + 2^{m_i}\}$ can be written in the form $$n=2 ^{m_1}+\cdots+2^{m_{i-1}}+a=2^{m_{i-1}} l +a$$ with $a \in \{0,1,\ldots,2^{m_i}-1\}$ and $l=1+2^{m_{i}-m_{i-1}}+\cdots+2^{m_1-m_{i-1}}$ if $i > 1$ and $l=0$ for $i=1$. Hence the binary digit vector of $n$ is given by $$\vec{n}=(a_0,a_1,\ldots,a_{m_i-1},l_0,l_1,l_2,\ldots)^{\top}=:{\vec{a} \choose \vec{l}},$$ where $a_0,\ldots,a_{m_i-1}$ are the binary digits of $a$ and $l_0,l_1,l_2,\ldots$ are the binary digits of $l$. With this notation we have
$$C_j \vec{n}=\left( \begin{array}{c} C_{j,\alpha m_i \times m_i}  \vec{a} \\ 0 \\ 0 \\ \vdots \end{array} \right)  + \left( \begin{array}{c}
             \\
  D_{j,\alpha m \times \NN}  \\
          \\ \hline  \\
 F_{j,\NN \times \NN}  \\
   \end{array} \right) \vec{l}.$$ For the point set $\cP_i$ under consideration, the vector
\begin{equation}\label{dig_shift}
\vec{\sigma}_{i,j}:=\left( \begin{array}{c}
             \\
  D_{j,\alpha m \times \NN}  \\
          \\ \hline  \\
 F_{j,\NN \times \NN}  \\
   \end{array} \right) \vec{l}
\end{equation}
is constant and its components become eventually zero (i.e., only a finite number of components is nonzero). Furthermore, $C_{j,\alpha m_i \times m_i}  \vec{a}$ for $a \in \{0,1,\ldots,2^{m_i}-1\}$ and $j \in \{1,\ldots,d\}$ generate an order $\alpha$ digital $(t,m_i,d)$-net over $\ZZ_2$ (which is also an order $\alpha'$ digital $(t,m_i,d)$-net over $\ZZ_2$ for $1 \le \alpha' \le \alpha$).

This means that the point set $\cP_i$ is a digitally shifted order $\alpha$ digital $(t,m_i,d)$-net over $\ZZ_2$  and the generating matrices
\begin{equation}\label{genmatpi}
C_{1, \alpha m_i \times m_i},\ldots, C_{d, \alpha m_i\times m_i}
\end{equation}
of this digital net are the left upper $\alpha m_i \times m_i$ submatrices of the generating matrices $C_1,\ldots, C_d$ of the digital sequence. We denote the digital shift, which is given by \eqref{dig_shift}, by $\bssigma_i$. Note that all the coordinates of the digital shift are dyadic rationals since the components of $\vec{\sigma}_{i,j}$ become eventually zero.

Let $\mathcal{D}_i$ denote the dual net corresponding to the digital net with generating matrices \eqref{genmatpi}, i.e.,
\begin{equation*}
\mathcal{D}_i = \{\bsk=(k_1,\ldots,k_d) \in \NN_0^d\ : \ C_{1, \alpha m_i \times m_i}^\top \vec{k}_1 + \cdots + C_{d, \alpha m_i \times m_i}^\top \vec{k}_d = \vec{0}\},
\end{equation*}
where for $k \in \NN_0$ with binary expansion $k = \kappa_0 + \kappa_1 2 + \kappa_2 2^2+\cdots$ we set $\vec{k} = (\kappa_0, \kappa_1,\ldots, \kappa_{\alpha m_i-1})^\top$. Set $\Dcal_i^{\ast}=\Dcal_i \setminus\{\bszero\}$.  Then we have
\begin{align*}
\sum_{n=0}^{N-1} \wal_{\bsk}(\bsx_n) = & \sum_{i=1}^r  \sum_{n=2^{m_1}+\cdots + 2^{m_{i-1}}}^{-1+2^{m_1}+\cdots + 2^{m_i}} \wal_{\bsk}(\bsx_n),
\end{align*}
where again for $i=1$ we set $2^{m_1}+\cdots + 2^{m_{i-1}} = 0$, and by the character property \eqref{charprop}
\begin{align*}
\sum_{n=2^{m_1}+\cdots + 2^{m_{i-1}}}^{-1+2^{m_1}+\cdots + 2^{m_i}} \wal_{\bsk}(\bsx_n) = & \left\{\begin{array}{ll} \wal_{\bsk}(\bssigma_i) 2^{m_i} & \mbox{if } \bsk \in \mathcal{D}_i, \\ 0 & \mbox{if } \bsk \notin \mathcal{D}_i. \end{array} \right.
\end{align*}

Therefore
\begin{align*}
(L^{{\rm per}}_{2,N}(\cS_d))^2  =  & \frac{1}{3^d N^2} \sum_{i, i' =1}^r 2^{m_i +m_{i'}} \sum_{\bsk \in \mathcal{D}^{\ast}_i, \bsell \in \mathcal{D}^{\ast}_{i'}} \rho(\bsk, \bsell) \wal_{\bsk}(\bssigma_i) \wal_{\bsell}(\bssigma_{i'}) \\ 
\le & \frac{1}{3^d N^2} \sum_{i, i' =1}^r 2^{m_i + m_{i'}} \sum_{\bsk \in \mathcal{D}^{\ast}_i, \bsell \in \mathcal{D}^{\ast}_{i'}} |\rho(\bsk, \bsell)|.
\end{align*}
Now define 
\begin{equation*}
\Ecal_{i,i'} = \{(\bsk,\bsell) \in \Dcal^\ast_i \times \Dcal^\ast_{i'} \ : \ \rho(\bsk,\bsell) \neq 0\}
\end{equation*}
and
\begin{equation*}
\Ecal_{i,i'}(z) = \{(\bsk,\bsell) \in \Ecal_{i,i'} \ : \ \mu(\bsk) + \mu(\bsell) = z\},
\end{equation*}
where the function $\mu: \mathbb{N}_0 \to \mathbb{N}_0$ is defined by $\mu(0):=0$ and for $k = \kappa_0 + \kappa_1 2 + \cdots + \kappa_{a-2} 2^{a-2} + 2^{a-1}$ with $\kappa_j \in\{0,1\}$ by $\mu(k):=a$ and, for $\bsk=(k_1,\ldots,k_d) \in \NN_0^d$, we put $\mu(\bsk):=\mu(k_1)+\cdots+\mu(k_d)$.

From Lemma~\ref{le:rho} it follows that for $(\bsk,\bsell) \in \Ecal_{i,i'}$ we have $$|\rho(\bsk,\bsell)| \le \frac{1}{2^{\mu(\bsk) + \mu(\bsell)}}.$$  Thus we have 
\begin{equation}\label{bd:l2per1}
(L^{{\rm per}}_{2,N}(\cS_d))^2  \le \frac{1}{3^d N^2}  \sum_{i,i'=1}^r 2^{m_i +m_{i'}} \sum_{(\bsk,\bsell) \in \Ecal_{i,i'}} \frac{1}{2^{\mu(\bsk) + \mu(\bsl)}}.
\end{equation}

Now we re-order the sum over all $(\boldsymbol{k},\boldsymbol{l}) \in \Ecal_{i,i'}$ according to the value of $\mu(\boldsymbol{k}) + \mu(\boldsymbol{l})$. Assume that $\bsk=(k_1,\ldots,k_d) \in \Dcal_i^\ast$. Let $k_j=\kappa_{j,0}+\kappa_{j,1} 2 +\cdots +\kappa_{j,a_j-2} 2^{a_j-2}+2^{a_j -1}$ with $a_j=\mu(k_j)$ for $j \in \{1,\ldots,d\}$. Let further $\vec{c}_{j,u}$ denote the $u$-th row vector of the matrix $C_{j,\alpha m_i \times m_i}$. Then $$C_{1, \alpha m_i \times m_i}^\top \vec{k}_1 + \cdots + C_{d, \alpha m_i \times m_i}^\top \vec{k}_d = \vec{0}$$ is equivalent to $$\sum_{j=1}^d \left(\sum_{u=0}^{a_j-2} \vec{c}_{j,u+1}^{\ \top} \kappa_{j,u} + \vec{c}_{j,a_j-1}^{\ \top}\right)=\vec{0}.$$ Hence it follows from the linear independence property for the row vectors of generating matrices of digital nets in Definition~\ref{def_net} that $$\mu(\bsk)=a_1+\cdots+a_d > m_i -t.$$ In the same way $\bsl \in \Dcal_{i'}^\ast$ implies that $\mu(\boldsymbol{l}) > m_{i'}-t$. Hence $(\bsk,\bsl)\in \Dcal^\ast_i \times \Dcal^\ast_{i'}$ implies $\mu(\bsk) + \mu(\bsl) \ge m_i + m_{i'} -2t+2$.

Thus for the innermost sum in \eqref{bd:l2per1} we have $$\sum_{(\bsk,\bsl) \in \Ecal_{i,i'}} \frac{1}{2^{\mu(\bsk)+\mu(\bsl)}} = \sum_{z= m_i + m_{i'} - 2t + 2}^\infty \frac{|\Ecal_{i,i'}(z)|}{2^z}$$ and this implies

\begin{equation}\label{bd:l2per2}
(L^{{\rm per}}_{2,N}(\cS_d))^2  \le \frac{1}{3^d}  \sum_{i,i'=1}^r \frac{2^{m_i}}{N} \frac{2^{m_{i'}}}{N}  \sum_{z= m_i + m_{i'} - 2t + 2}^\infty \frac{|\Ecal_{i,i'}(z)|}{2^z}.
\end{equation}

Now the right hand side of the estimate \eqref{bd:l2per2} for the squared periodic $L_2$-discrepancy is -- up to the factor $1/3^d$ -- the same as the upper bound on the standard $L_2$-discrepancy in \cite[Lemma~3.1, Eq. (16)]{DP14}. Note that the sets $\Ecal_{i,i'}(z)$ are subsets of the corresponding sets $\Jcal_{i,i'}(z)$ used in \cite[Lemma~3.1, Eq. (15)]{DP14}. This means that from this stage on we can proceed in exactly the same way as in \cite{DP14} in order to obtain the proposed upper bound in Theorem~\ref{thm2}.
\end{proof}

\begin{remark}\rm
In \cite{DHMP} it is shown by a different proof method that even order $\alpha=2$ digital sequences suffice in order to obtain the optimal order of magnitude for the standard $L_2$-discrepancy. We conjecture that the same holds true for the periodic $L_2$-discrepancy.  
\end{remark}

\noindent{\bf Author's Address:}

\noindent Friedrich Pillichshammer, Institut f\"{u}r Finanzmathematik und Angewandte Zahlentheorie, Johannes Kepler Universit\"{a}t Linz, Altenbergerstra{\ss}e 69, A-4040 Linz, Austria. Email: friedrich.pillichshammer@jku.at\\

\end{document}